\numberwithin{equation}{section}
\newtheorem{thm}{Theorem}[section]
\newtheorem{pro}[thm]{Proposition}
\newtheorem{lem}[thm]{Lemma}
\newtheorem{rem}[thm]{Remark}
\newtheorem*{cor*}{Corollary}
\newtheorem*{thm*}{Theorem}
\title[Bruhat intervals and parabolic cosets ]{
Bruhat intervals and parabolic cosets in arbitrary Coxeter groups}
\author{Mario Marietti}
\address{Dipartimento  di Ingegneria Industriale e Scienze Matematiche, Universit\`a Politecnica delle Marche, Via Brecce Bianche, 60131 Ancona,  Italy}
\email{m.marietti@univpm.it}
\subjclass[2010]{20F55, 05E99}
\keywords{Coxeter groups, Bruhat order, parabolic cosets}
\begin{document}

\begin{abstract}
In  [Journal of Pure and Applied Algebra {224} (2020), no 12,  106449], V. Mazorchuk and R. Mr{\dj}en (with some help by A. Hultman) prove that, given a Weyl group, the intersection of a Bruhat interval with a parabolic coset has a unique maximal element and a unique minimal element. We show that such intersections are actually Bruhat intervals also in the case of an arbitrary  Coxeter group.
\end{abstract}

\maketitle

\section{Introduction}
Let $W$ be a Weyl group and $S$ be a set of Coxeter generators of $W$. Consider the Bruhat order with respect to $S$. Let $J$ be a subset of $S$ and $x,y,u \in W$.   In \cite{MazMrd}, V. Mazorchuk and R. Mr{\dj}en prove that, if the intersection of the Bruhat interval $[x,y]$ with the parabolic coset $uW_j$ is nonempty, then it has a unique maximal element  (see \cite[Lemma 3]{MazMrd}, for which the authors acknowledge help by A. Hultman) and a unique minimal element (see \cite[Lemma 5]{MazMrd}). The proof for the existence of a unique maximal element works also in the case of an arbitrary  Coxeter group. On the other hand, the proof for the existence of a unique minimal element makes use of the longest element of $W$, which does not exist in infinite Coxeter groups. In this short note, we give an alternative proof that does not assume the finiteness and works for all Coxeter groups.

\bigskip 

\section{Notation and preliminaries}

This section reviews the background material that is needed  in the proof of Theorem~\ref{teorema}.

Let $(W,S)$ be an arbitrary Coxeter system. The group $W$, under Bruhat order (see, e.g.,  \cite[\S 2.1]{BB} or \cite[\S 5.9]{Hum}), sometimes also called Bruhat-Chevalley order,  is a graded partially ordered set  having the length function $\ell$ as its rank function. This means that $W$ has a minimum, which is the identity element $e$, and the   function $\ell$  satisfies $\ell (e)=0$ and $\ell (y) =\ell (x)+1$ for all $x,y \in W$ with $x \lhd y$. Here $x \lhd y$, as well as  $y \rhd x$, means that  the Bruhat interval $[x,y]$ coincides with $\{x,y\}$. 

Given $w\in W$, we let $D_R(w)$ denote the right descent set $\{ s \in S : \; \ell(w  s) < \ell(w ) \}$ of $w$. Given a subset $J$ of $S$, we let $W_J$ denote  the parabolic subgroup of $W$ generated by $J$ and $W^J$ denote the set $\{ w \in W \, : \; D_{R}(w)\subseteq S\setminus J \}$ of minimal left coset representatives. For $x\in W$, we let $W_{\leq x} =\{ w \in W \, : \; w \leq x \}$ and $W_{\geq x} =\{ w \in W \, : \; w \geq x \}$.

The following results are well known (see, e.g.,  \cite[Proposition~2.2.7]{BB} or \cite[Proposition~5.9]{Hum} for the first one,  \cite[\S 2.4]{BB} or \cite[\S 1.10]{Hum} for the second one, and \cite[\S 3.2]{BB} for the lattice property of the weak Bruhat order implying the third one). 
\begin{lem}[Lifting Property]
\label{ll}
Let $s\in S$ and $u,w\in W$, $u\leq w$. Then
\begin{enumerate}
\item[(i)]
\label{i}
 if $s\in D_R(w)$ and $s\in D_R(u)$ then $us\leq ws$,
\item[(ii)] 
\label{ii}
if $s\notin D_R(w)$ and $s \notin D_R(u)$ then $us\leq ws$,
\item[(iii)] 
\label{iii}
if $s\in D_R(w)$ and $s\notin D_R(u)$ then $us\leq w$ and $u\leq ws$.
\end{enumerate}
\end{lem}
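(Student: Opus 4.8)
The plan is to reduce all three statements to a single core case, part (iii), and to prove that case directly from the standard subword characterization of Bruhat order recorded in the same references (see \cite[\S 2.2]{BB}): $u\leq w$ if and only if some (equivalently, every) reduced expression for $w$ admits a subexpression that is a reduced expression for $u$. First I would record two elementary observations to be used repeatedly. For $v\in W$ and $s\in S$, the elements $v$ and $vs$ are always Bruhat-comparable, with $vs\leq v$ when $s\in D_R(v)$ and $v\leq vs$ when $s\notin D_R(v)$ (append or delete the terminal $s$ in a reduced expression). Moreover, a reduced expression for $v$ ends in the letter $s$ precisely when $s\in D_R(v)$.

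Next I would prove part (iii), which is the heart of the matter. Assume $u\leq w$, $s\in D_R(w)$, and $s\notin D_R(u)$. Choose a reduced expression $s_1\cdots s_p$ for $ws$, so that $s_1\cdots s_p\,s$ is a reduced expression for $w$. By the subword property there is a subexpression of $s_1\cdots s_p\,s$ that is a reduced expression for $u$. Since $s\notin D_R(u)$, no reduced expression for $u$ ends in $s$, so this subexpression cannot use the terminal letter; hence it lies inside $s_1\cdots s_p$, which gives $u\leq ws$. Appending the terminal $s$ to that same subexpression produces an expression for $us$ that is reduced (because $s\notin D_R(u)$ forces $\ell(us)=\ell(u)+1$) and is a subexpression of $s_1\cdots s_p\,s$, whence $us\leq w$.

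Finally I would deduce (i) and (ii) formally from (iii). For (i), with $s\in D_R(u)\cap D_R(w)$, the first observation gives $us\leq u\leq w$; since $s\in D_R(w)$ while $s\notin D_R(us)$ (as $(us)s=u$ has greater length), applying (iii) to the pair $(us,w)$ yields $us\leq ws$. For (ii), with $s\notin D_R(u)$ and $s\notin D_R(w)$, the first observation gives $u\leq w\leq ws$; since $s\in D_R(ws)$ (as $(ws)s=w$ has smaller length) while $s\notin D_R(u)$, applying (iii) to the pair $(u,ws)$ again yields $us\leq ws$.

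I expect the only genuinely delicate point to be the step in (iii) asserting that the reduced subexpression for $u$ may be taken to avoid the final letter: this rests precisely on the equivalence between \emph{a reduced expression ending in $s$} and \emph{$s$ being a right descent}, combined with the existence half of the subword property. Once (iii) is in hand, parts (i) and (ii) are purely formal consequences, so no further obstacle arises.
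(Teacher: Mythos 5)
Your proof is correct. The paper does not actually prove Lemma~\ref{ll} --- it quotes it as a well-known result, pointing to \cite[Proposition~2.2.7]{BB} and \cite[Proposition~5.9]{Hum} --- and your argument (establishing part (iii) from the subword characterization of Bruhat order together with the fact that $s\in D_R(v)$ exactly when some reduced expression for $v$ ends in $s$, then deducing (i) and (ii) formally by applying (iii) to the pairs $(us,w)$ and $(u,ws)$) is essentially the standard proof given in those references, with the one delicate step, that the reduced subexpression for $u$ can be taken to avoid the terminal letter, handled correctly.
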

\begin{pro}
\label{fattorizzo}
Let $J \subseteq S$. 
Every $w \in W$ has a unique factorization $w=w^{J} \cdot w_{J}$ 
with $w^{J} \in W^{J}$ and $w_{J} \in W_{J}$; for this factorization, $\ell(w)=\ell(w^{J})+\ell(w_{J})$.
\end{pro}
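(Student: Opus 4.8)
The plan is to obtain the factorization through \emph{minimal-length coset representatives}, an approach that avoids any appeal to a longest element and so works for infinite $W$. First I would note that, since $\ell$ takes values in the nonnegative integers, every nonempty subset of $W$ has an element of minimal length; in particular the left coset $wW_J$ contains such an element $m$. I claim $m\in W^J$: if some $s\in J$ were a right descent of $m$, then $ms$ would again lie in $wW_J$ but have strictly smaller length, contradicting the minimality of $m$. Hence $D_R(m)\cap J=\emptyset$, that is, $m\in W^J$. Setting $w^J:=m$ and $w_J:=m^{-1}w$, we get $w_J\in W_J$ (since $m$ and $w$ lie in the same coset) and $w=w^J w_J$, which is the desired factorization.

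The heart of the argument, and the step I expect to be the main obstacle, is the length additivity $\ell(w)=\ell(w^J)+\ell(w_J)$, which reduces to the general fact that $\ell(mv)=\ell(m)+\ell(v)$ for every $v\in W_J$. I would prove this by induction on $\ell(v)$, the case $v=e$ being trivial. For the inductive step, write $v=v's$ with $s\in J$ and $\ell(v')=\ell(v)-1$; by the inductive hypothesis $\ell(mv')=\ell(m)+\ell(v')$, so a reduced word for $mv'$ is obtained by concatenating a reduced word for $m$ with one for $v'$. If additivity failed we would have $s\in D_R(mv')$, and the Exchange Condition (a foundational property of Coxeter systems), applied to this reduced word, would produce an expression for $mv's$ by deleting a single letter. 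If the deleted letter comes from the $v'$-part, then $v=v's$ would be expressible by a word of length $\ell(v)-2$, contradicting $\ell(v)=\ell(v')+1$. If instead it comes from the $m$-part, then $mv's=m'v'$ with $\ell(m')=\ell(m)-1$ and $m'=m(v'sv'^{-1})\in wW_J$, contradicting the minimality of $m$ in its coset. Either way we reach a contradiction, so $\ell(mv's)=\ell(mv')+1$ and the induction closes.

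Finally I would settle uniqueness. The additivity just established shows that $m$ is the unique minimal-length element of $wW_J$ and, more usefully, the unique element of $W^J$ in that coset: if $a=mv\in W^J$ with $v\neq e$, choosing $s\in D_R(v)\cap J$ gives $\ell(as)=\ell(m)+\ell(vs)=\ell(a)-1$ by additivity, so $s\in D_R(a)\cap J$, contradicting $a\in W^J$. Consequently, in any factorization $w=w^J w_J$ with $w^J\in W^J$ and $w_J\in W_J$ the factor $w^J$ must coincide with $m$, whence $w_J=m^{-1}w$ is determined as well, giving the uniqueness.
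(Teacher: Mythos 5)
The paper does not actually prove this proposition: it quotes it as a well-known result, pointing to \cite[\S 2.4]{BB} and \cite[\S 1.10]{Hum}. Your argument is correct and is essentially the standard proof given in those references --- take a minimal-length element $m$ of the coset $wW_J$, use the Exchange Condition to get $\ell(mv)=\ell(m)+\ell(v)$ for $v\in W_J$ (the case split on where the deleted letter falls is handled correctly, the key point being that $m(v'sv'^{-1})$ stays in the coset), and deduce that $m$ is the unique element of $W^J$ in the coset --- so there is nothing to compare beyond noting that your write-up is a faithful, self-contained version of the proof the paper defers to.
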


\begin{pro}
\label{discese}
Let $x\in W$. If $s_1,s_2 \in D_R(x)$, then the order of the product $s_1  s_2$ is finite. 
\end{pro}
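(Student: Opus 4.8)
The plan is to set $J=\{s_1,s_2\}$ and to show that the parabolic subgroup $W_J$ is finite, which is equivalent to the order $m$ of $s_1 s_2$ being finite. The idea is to push the two descents $s_1,s_2$ of $x$ down into the dihedral parabolic $W_J$ by means of the parabolic factorization of Proposition~\ref{fattorizzo}, and then to observe that an element of an \emph{infinite} dihedral group can never have two right descents.

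First I would write $x=x^{J}\cdot x_{J}$ as in Proposition~\ref{fattorizzo}, with $x^{J}\in W^{J}$, $x_{J}\in W_{J}$, and $\ell(x)=\ell(x^{J})+\ell(x_{J})$. The key step is to transfer the descents from $x$ to $x_J$: for any $s\in J$ one has $x_{J}s\in W_{J}$, so $x^{J}\cdot(x_{J}s)$ is again a factorization of the type described in Proposition~\ref{fattorizzo}, whence by uniqueness and length-additivity $\ell(xs)=\ell(x^{J})+\ell(x_{J}s)$. Comparing with $\ell(x)=\ell(x^{J})+\ell(x_{J})$ gives, for $s\in J$,
\[
s\in D_R(x)\iff \ell(x_Js)<\ell(x_J)\iff s\in D_R(x_J).
\]
In particular $s_1,s_2\in D_R(x_J)$, so the element $x_J$ of the dihedral group $W_J$ has both its generators as right descents.

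It then remains to rule out $m=\infty$. If $W_J$ were infinite, it would be the infinite dihedral group, in which the only relations are $s_1^2=s_2^2=e$; hence every nonidentity element has a unique reduced expression, which is an alternating word in $s_1$ and $s_2$ and therefore ends in exactly one generator. Such an element has exactly one right descent, and the identity has none, so no element of $W_J$ could have both $s_1$ and $s_2$ in its right descent set---contradicting $s_1,s_2\in D_R(x_J)$. Therefore $m<\infty$.

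The only genuinely delicate point is the descent-transfer step: one must be certain that $x^{J}\cdot(x_Js)$ really is the canonical factorization, so that length-additivity applies. This follows from the uniqueness asserted in Proposition~\ref{fattorizzo} together with $x_Js\in W_J$ and $x^J\in W^J$. The dihedral computation at the end is elementary; the main conceptual work is the reduction to $W_J$.
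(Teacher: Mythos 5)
Your proof is correct, but it takes a genuinely different route from the paper, which states Proposition~\ref{discese} without proof and attributes it to the lattice property of the weak Bruhat order (\cite[\S 3.2]{BB}). In that approach, $s_1,s_2\in D_R(x)$ says exactly that $s_1$ and $s_2$ lie below $x$ in (left) weak order; since bounded subsets in weak order have joins, the pair $\{s_1,s_2\}$ has a join, and that join must be the longest element of $W_{\{s_1,s_2\}}$, which forces this subgroup to be finite. Your argument replaces this appeal to the (nontrivial) semilattice theorem by two elementary ingredients: the descent-transfer identity $\ell(xs)-\ell(x)=\ell(x_Js)-\ell(x_J)$ for $s\in J$, which you justify correctly --- $x^J\cdot(x_Js)$ is indeed the canonical factorization of $xs$ by the uniqueness and length-additivity in Proposition~\ref{fattorizzo}, so the descents of $x$ at letters of $J$ are exactly the descents of $x_J$ --- and the observation that in the infinite dihedral group every nonidentity element has a unique, alternating reduced word and hence exactly one right descent, while the identity has none. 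Both steps are sound, and the reduction correctly rules out $W_J$ infinite, which is equivalent to $s_1s_2$ having infinite order. What the paper's citation buys is brevity and a conceptual placement of the fact within the general theory of weak order; what your argument buys is self-containedness, since it relies only on the parabolic factorization and on the explicit normal form in the infinite dihedral group.
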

Symmetrically, left versions of Lemma~\ref{ll} and Proposition~\ref{fattorizzo} hold, as well as of the following well-known (and immediate to prove) result:
\begin{eqnarray}
\label{minoreinparabolico}
 v\leq w  \implies  v^J\leq w^J 
\end{eqnarray}

\section{Arbitrary Coxeter groups}

\begin{thm}
\label{teorema}
Let $(W,S)$ be an arbitrary Coxeter system. The intersection of a Bruhat interval with a parabolic coset is a Bruhat interval.  
\end{thm}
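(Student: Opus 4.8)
The plan is to show that for a Bruhat interval $[x,y]$ and a parabolic coset $uW_J$, the intersection $[x,y] \cap uW_J$ is a Bruhat interval whenever it is nonempty. By the cited work (and the observation that the proof for the maximal element already works in the general case), I may assume the intersection has a unique maximal element, say $M$, and a unique minimal element, say $m$. So the real content is to prove that the intersection equals the full Bruhat interval $[m, M]$; the containment $[x,y] \cap uW_J \subseteq [m,M]$ is immediate from $m$ and $M$ being the extreme elements, so the work is the reverse containment: every $z$ with $m \le z \le M$ must lie in $[x,y] \cap uW_J$.

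First I would reduce to a normalized situation. Since all elements of $uW_J$ lie in a single left coset, I would write each such element via the left-coset factorization: there is a fixed minimal coset representative $c \in {}^JW$ (the left-handed analogue of Proposition~\ref{fattorizzo}) so that every element of the coset is $c \cdot w_J$ with $w_J \in W_J$, and lengths add. This identifies the coset with $W_J$ as a poset under Bruhat order (translation by $c$ on the left is an order isomorphism onto the coset, by the left version of \eqref{minoreinparabolico} together with length additivity). Under this identification, $m$ and $M$ correspond to elements $a, b \in W_J$ with $a \le b$, and the intersection corresponds to some subset of the interval $[a,b]_{W_J}$ that contains both endpoints and is contained in $[a,b]_{W_J}$.

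The crux is then to prove that if $z \in uW_J$ satisfies $m \le z \le M$ in $W$, then automatically $x \le z \le y$. The upper bound: I expect to show $z \le M \le y$ gives $z \le y$ by transitivity, which is trivial once one knows $M \le y$ (true since $M \in [x,y]$). The genuinely hard direction is the lower bound $x \le z$, since knowing $m \le z$ and $x \le m$ only yields $x \le z$ by transitivity as well --- so in fact \emph{both} bounds follow by transitivity once we know that $[m,M] \cap uW_J$ actually lies inside $[x,y]$, and the real question is whether $[m,M] \subseteq uW_J$, i.e., whether the whole Bruhat interval from $m$ to $M$ stays inside the coset. This is the main obstacle: a priori an element $z$ between $m$ and $M$ in $W$ need not lie in the coset $uW_J$ at all.

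To overcome this I would argue that the coset $uW_J$ is ``convex enough'' relative to the pair $(m,M)$: writing $m = c\,a$ and $M = c\,b$ with $a \le b$ in $W_J$, I would take any $z$ with $m \le z \le M$ and use the left-coset projection $z \mapsto {}^Jz$ (the minimal representative of $z W_J$, i.e. the analogue used in \eqref{minoreinparabolico}). Since $m, M \in uW_J = c\,W_J$, both have the same projection $c$, and the left analogue of \eqref{minoreinparabolico} forces $c = {}^Jm \le {}^Jz \le {}^JM = c$, whence ${}^Jz = c$ and therefore $z \in cW_J = uW_J$. This is the key point that keeps the interior of the interval inside the coset; it reduces everything to the order isomorphism of the previous paragraph, after which the containment $[m,M] \subseteq [x,y]$ follows from $x \le m$ and $M \le y$ by transitivity, completing the identification $[x,y]\cap uW_J = [m,M]$. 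I would expect the delicate verification to be exactly the monotonicity of the projection $z \mapsto {}^Jz$ and its interaction with the hypothesis that $m$ and $M$ share a projection, so I would lean on the left version of \eqref{minoreinparabolico} as the decisive tool.
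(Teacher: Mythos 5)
There is a genuine gap, and it is located exactly where you declare the problem solved. You write that ``by the cited work \ldots I may assume the intersection has a unique maximal element, say $M$, and a unique minimal element, say $m$.'' But the cited work proves the existence of a unique minimal element only for Weyl groups, and its proof relies on the longest element $w_0$ of $W$, which does not exist when $W$ is infinite. Establishing the unique minimal element for an \emph{arbitrary} Coxeter system is the entire content of the paper's proof; by assuming it you have skipped the theorem. The paper's argument for this step is an induction on $\ell(x)$ applied to $W_{\geq x}\cap uW_J$: supposing two distinct minimal elements $m_1,m_2$, one extracts descents $s_1,s_2\in J$ with $xs_i\lhd x$, uses the induction hypothesis together with the Lifting Property to produce the relations $m_1s_1\lhd m_2$ and $m_2s_2\lhd m_1$, concludes that $m_1,m_2$ lie in a common coset $wW_{\{s_1,s_2\}}$ of the \emph{finite} dihedral parabolic $W_{\{s_1,s_2\}}$ (finite precisely because $s_1,s_2\in D_R(x)$, via Proposition~\ref{discese}), and then transfers the data to $W_{\leq w}\cap xW_{\{s_1,s_2\}}$, where it would yield two distinct maximal elements --- contradicting the already-available uniqueness of maximal elements. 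None of this appears in your proposal, and no alternative argument is offered.

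The part you do argue carefully --- that once $m$ and $M$ exist, every $z$ with $m\leq z\leq M$ lies in $uW_J$ and hence the intersection equals the full interval $[m,M]$ --- is correct, and it is exactly the paper's one-sentence reduction via (\ref{minoreinparabolico}): since all elements of $uW_J$ share the same component $u^J\in W^J$ under the factorization of Proposition~\ref{fattorizzo}, monotonicity of $z\mapsto z^J$ pins $z^J=u^J$ and forces $z\in uW_J$. (A small notational slip: for a \emph{left} coset $uW_J$ the relevant projection is $z\mapsto z^J$ as in (\ref{minoreinparabolico}) itself, not its left-handed analogue, which pertains to right cosets.) So your proposal correctly handles the easy reduction but leaves the theorem's actual mathematical content unproved.
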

\begin{proof}
By (\ref{minoreinparabolico})
, it is sufficient to prove that the intersection of a Bruhat interval with a parabolic coset, if nonempty, has a unique maximal element and a unique minimal element. The proof  in \cite[Lemma 3]{MazMrd} for the existence of a unique maximal element  in Weyl groups works also for arbitrary Coxeter groups.

 Let us prove the existence of a unique minimal element in the case of a left coset (the mirrored argument works for a right coset). It is sufficient to prove the following claim:  given $x,u \in W$ and a subset $J$ of $S$, the intersection $W_{\geq x} \cap uW_J$, if nonempty,  has a unique minimal element. 
 
 Let $x$, $u$, and $J$ be as in the claim and suppose $W_{\geq x} \cap uW_J\neq \emptyset$. We may also suppose $u\in W^J$. We use induction on $\ell(x)$. If  $\ell(x)=0$, then $x=e$ and $W_{\geq e} \cap uW_J$ has a unique minimal element, which is $u$. 
 
 Suppose $\ell(x)>0$ and, towards a contradiction, suppose that $m_1$ and $m_2$ are two distinct minimal elements of $W_{\geq x} \cap uW_J$.

Fix $i\in \{1,2\}$. Clearly $m_i\neq u$. Hence, there exists $s_i\in D_R(m_i)\cap J$. The minimality of $m_i$ implies $xs_i \lhd x$ since otherwise, by Lemma~\ref{ll}(iii), we would have $m_is_i \in W_{\geq x} \cap uW_J$. Let $m^i= \min  (W_{\geq xs_i} \cap uW_J)$, which exists by the induction hypothesis and satisfies $m^i\leq m_1$ and $m^i\leq m_2$ since $m_1$ and $m_2$ both belong to $W_{\geq xs_i} \cap uW_J$. Furthermore. $m^is_i \rhd m^i$ since otherwise, by  Lemma~\ref{ll}(iii), we would have $m^i \in W_{\geq x} \cap uW_J$, against the minimality of $m_1$ and $m_2$. Again Lemma~\ref{ll}(iii) implies $m^is_i \leq m_i$ while Lemma~\ref{ll}(ii) implies  $m^is_i  \in W_{\geq x} \cap uW_J$; by the minimality of $m_i$, we have $m^is_i=m_i$. Furthermore, if we let $\bar{i}$ be the element of the singleton $\{1,2\} \setminus \{i\}$, then we have $m_{\bar{i}}s_i \rhd m_{\bar{i}}$ since otherwise the same argument  would imply that also $m_{\bar{i}}$ coincides with $m^is_i$, but $m_1\neq m_2$.

The four relations $m_1s_1\lhd m_1$, $m_1s_1\leq m_2$, $m_2s_2\lhd m_2$, $m_2s_2\leq m_1$ imply $\ell(m_1)=\ell(m_2)$, $m_1s_1\lhd m_2$, and $m_2s_2\lhd m_1$. By a repeated use of Lemma~\ref{ll}(i), we conclude that there exists $w\in W^{\{s_1,s_2\}}$ such that $m_1$ and $m_2$ belong  to the coset $w W_{\{s_1,s_2\}}$. 

Notice that $xs_1 \lhd x$ and $xs_2 \lhd x$; hence $W_{\{s_1,s_2\}}$ is finite by Lemma~\ref{discese}, and $x$ is the top element of the coset $xW_{\{s_1,s_2\}}$.

Lemma~\ref{ll}(i) implies

 $$w \,  (\underbrace{\cdots s_i s_{\bar{i}} s_i}_{\text{$h$ terms}}) \geq x \iff 
 w \,  (\underbrace{\cdots s_i s_{\bar{i}} }_{\text{$h-1$ terms}}) \geq x s_i \iff 
 \cdots \iff
w\geq x \, (\underbrace{s_i s_{\bar{i}} s_i \cdots }_{\text{$h$ terms}})  $$
  for each $h\in \mathbb N$ smaller than, or equal to, the rank of $W_{\{s_1,s_2\}}$.
  
 Hence, by  the four relations $m_1 \geq x$, $m_2 \geq x$, $m_1s_1 \not \geq x$, and  $m_2s_2 \not \geq x$, we conclude that the intersection $W_{\leq w} \cap xW_{\{s_1,s_2\}}$ has two distinct maximal elements, which is a contradiction since we know that $W_{\leq w} \cap xW_{\{s_1,s_2\}}$ has a unique maximal element.
\end{proof}

\begin{rem}
\begin{enumerate}
\item Prior to \cite{MazMrd}, the special case of the existence of a unique maximal element in $W_{\geq x} \cap W_J$ , for all $x\in W$, is proved in \cite[Lemma 7]{Hom74}.
\item The proof of Theorem~\ref{teorema} provides another evidence of the fundamental role of (parabolic) dihedral subgroups and dihedral intervals in understanding the combinatorial properties of Coxeter groups (see, for example, \cite{BCM1}, \cite{CM1}, \cite{CM2}, 
\cite{Dye2}, \cite{Dye3}, \cite{Dyepreprint}, \cite{Mtrans}, \cite{M}
).
\item  Fix a subset $J$ of $S$. Let $x\in W$ and $u_1,u_2\in W^J$. While $\min ( W_{\geq x} \cap u_1W_J) \leq \min ( W_{\geq x} \cap u_2W_J)$, as well as $\max ( W_{\geq x} \cap u_1W_J) \leq \max ( W_{\geq x} \cap u_2W_J)$, clearly implies $u_1 \leq u_2$ by  (\ref{minoreinparabolico}), the converse does not hold in general, as one may see already   in type $A_2$. 
\end{enumerate}
\end{rem}

\end{document}